\documentclass[11pt]{amsart}

\usepackage{amsfonts}
\usepackage{amsthm}
\usepackage{amsmath, amssymb}
\usepackage{thmtools, thm-restate}
\usepackage{enumerate}
\usepackage{graphicx}
\usepackage{color}
\usepackage[percent]{overpic}

\usepackage[colorlinks,linkcolor=blue,citecolor=blue]{hyperref}

\usepackage[top=2.5cm, bottom=2.5cm, left=2.5cm, right=2.5cm]{geometry}

\linespread{1.0}

\newtheorem{thm}{Theorem}
\newtheorem{prop}[thm]{Proposition}
\newtheorem{corol}[thm]{Corollary}
\newtheorem{lem}[thm]{Lemma}

\newcommand{\gtop}{g_4^{\rm top}}

\newcommand{\ualg}{u_{\rm alg}}
\newcommand{\qfloor}{\left\lfloor\frac{q}{2}\right\rfloor}
\newcommand{\lk}{{\rm lk}\,}

\newcommand{\spin}{\ifmmode{\rm Spin}\else{${\rm spin}$\ }\fi}
\newcommand{\spinc}{\ifmmode{{\rm Spin}^c}\else{${\rm spin}^c$}\fi}

\begin{document}

\title{Gaps between consecutive untwisting numbers.}

\author{Duncan McCoy}
\address{Department of Mathematics \\
         The University of Texas At Austin}

\begin{abstract}
For $p\geq 1$ one can define a generalization of the unknotting number $tu_p$ called the $p$th untwisting number which counts the number of null-homologous twists on at most $2p$ strands required to convert the knot to the unknot. We show that for any $p\geq 2$ the difference between the consecutive untwisting numbers $tu_{p-1}$ and $tu_p$ can be arbitrarily large. We also show that torus knots exhibit arbitrarily large gaps between $tu_1$ and $tu_2$.
\end{abstract}

\maketitle

\section{Introduction}
Given a knot $K$ in $S^3$, we perform a {\em null-homologous twist} by taking an unknotted curve $C$ disjoint from $K$ with $\lk(C,K)=0$ and performing $+1$-surgery or $-1$-surgery on $C$. If $C$ bounds an embedded disk intersecting $K$ transversely in $2p$ points, then we call this a {\em null-homologous twist on $2p$ strands}. Such a twist can always be performed locally by adding a full twist on $2p$ parallel strands with appropriate orientations. An example of a null-homologous twist on four strands is shown in Figure~\ref{fig:sample_operation}. 

\begin{figure}
  \begin{overpic}[width=0.3\textwidth]{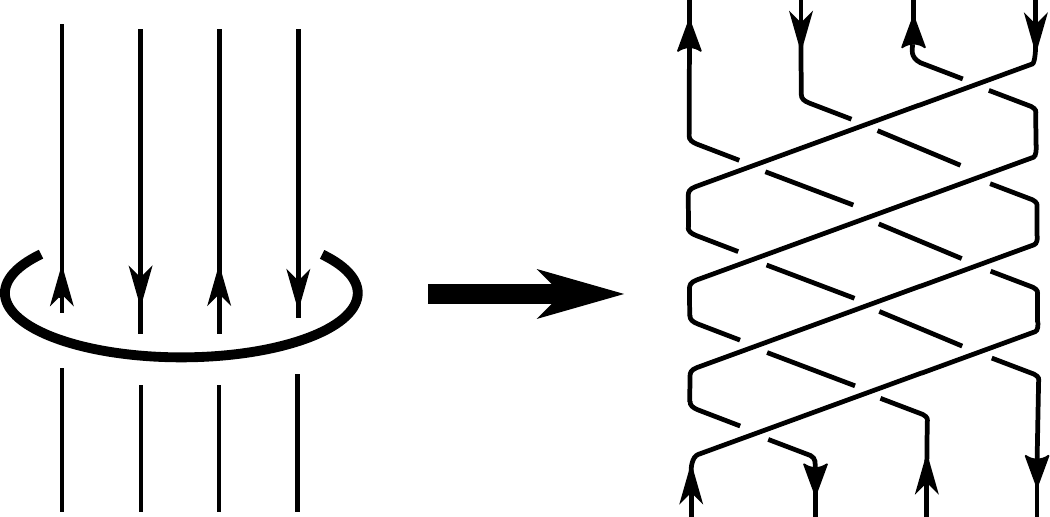}
    \put (-11,18) {$-1$}
  \end{overpic}
  \caption{A null-homologous twist on $4$ strands.}
  \label{fig:sample_operation}
  \end{figure}

Ince used null-homologous twisting operations to define an infinite sequence of generalizations to the unknotting number \cite{Ince16Untwisting}. For a knot $K$ the {\em $p$th untwisting number}, denoted $tu_p(K)$, is the minimum number of null-homologous twists on at most $2p$ strands required to convert $K$ to the unknot. Since a null-homologous twist on two strands is equivalent to a standard crossing change, $tu_1$ coincides with the classical unknotting number. 
One may also define the {\em untwisting number} by $tu(K)=\min tu_p(K)$. 
Clearly the untwisting numbers form a decreasing sequence:
\[u(K)=tu_1(K)\geq tu_2(K) \geq \dots \geq tu_{p-1}(K) \geq tu_{p}(K)\geq \dots \geq tu(K).\]
The main purpose of this article is to show that the difference between consecutive pairs of untwisting numbers can be arbitrarily large.

\begin{restatable}{thm}{unboundedgaps}\label{thm:gaps}
For any pair of positive integers $p \geq 2$ and $m\geq 1$, there is a knot $K$ such that
\[
tu_{p-1}(K)-tu_{p}(K) \geq m.
\]
\end{restatable}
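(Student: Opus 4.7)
My plan is to exhibit a family of knots $\{K_m\}$ with $tu_p(K_m)\leq 1$ and $tu_{p-1}(K_m)\geq m+1$, which immediately gives the desired bound $tu_{p-1}(K_m)-tu_p(K_m)\geq m$.

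First, I would construct $K_m$ as the knot obtained from the unknot by a single null-homologous twist on $2p$ strands, where the $2p$-strand tangle is chosen to depend on $m$ so that the double branched cover $\Sigma_2(K_m)$ has growing complexity---for instance, the Blanchfield form of $K_m$ has rank growing with $m$, or $|H_1(\Sigma_2(K_m))|$ grows. By construction, $tu_p(K_m)\leq 1$.

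The main step is the lower bound on $tu_{p-1}(K_m)$. Any realization of $K\to$ unknot via $n$ twists on at most $2q$ strands presents $\Sigma_2(K)$ as the boundary of a $4$-manifold built from $B^4$ by attaching $n$ two-handles along a link whose components cobound surfaces of genus at most $q$ in $B^4$. The intersection form and \spinc\ structures of this filling are then controlled by $n$ and $q$. Pairing this topological constraint with an invariant $\Phi$ of $\Sigma_2(K_m)$---naturally a Heegaard Floer $d$-invariant or a linking-form invariant, in line with the preamble's \spinc, $\galg$, $\ualg$ machinery---should produce a lower bound of the form $tu_{p-1}(K_m)\geq \Phi(K_m)/F(p-1)$ for an explicit function $F$. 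One then verifies that $\Phi(K_m)$ grows linearly in $m$.

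The central obstacle is subtle: a single invariant $\Phi$ satisfying $\Phi(K)\leq F(q)\,tu_q(K)$ uniformly for every $q$ cannot yield unbounded gaps, because $tu_p(K_m)\leq 1$ would then force $\Phi(K_m)$ bounded, contradicting the growth required for the lower bound on $tu_{p-1}(K_m)$. Thus $\Phi$ must be genuinely $p$-sensitive: a single $(2p)$-strand twist must be able to reset it, while a $(2p-2)$-strand twist cannot. Constructing such an invariant---likely a refinement of the algebraic genus $\galg$ or algebraic unknotting number $\ualg$ tailored to metabolizers of prescribed rank, or a suitably localized $d$-invariant---and then tuning the construction of $K_m$ so that this invariant grows with $m$ while remaining consistent with $tu_p(K_m)\leq 1$, is the main technical hurdle.
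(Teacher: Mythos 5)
Your proposal has a genuine gap, and you identify it yourself: the whole argument hinges on an invariant $\Phi$ that is reset by a single null-homologous twist on $2p$ strands yet grows without bound under twists on $2p-2$ strands, and no such invariant is constructed --- nor are the knots $K_m$, whose required properties are only described, nor is the handle/cobordism picture for $\Sigma_2(K_m)$ made precise. The obstructions you invoke ($d$-invariants or linking forms of the double branched cover, refinements of $\galg$ or $\ualg$) are not known to distinguish the number of strands in the way you need, and your own ``central obstacle'' paragraph correctly shows that any bound of the uniform shape $\Phi(K)\leq F(q)\,tu_q(K)$ is useless once you insist on $tu_p(K_m)\leq 1$. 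In other words, the normalization $tu_p(K_m)\leq 1$ makes the problem strictly harder than the theorem requires (it is not even settled here whether knots with $tu_p=1$ and $tu_{p-1}$ arbitrarily large exist), and the one step you flag as the ``main technical hurdle'' is precisely the missing proof.

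The paper sidesteps this entirely by letting \emph{both} untwisting numbers be large and exploiting the $q$-dependence of the one uniform bound that is easy to prove: a null-homologous twist on $2q$ strands changes $g_4$ by at most $q$ (a $2q$-band-move argument), so $tu_q(K)\geq g_4(K)/q$. With $n=m(p-1)$, $K=T_{2,2n+1}$, and $K_p$ its $(p,1)$-cable, a satellite lemma gives $tu_p(K_p)\leq tu_1(K)=n$, while Hom's cabling formula for $\tau$ (using $\varepsilon(K)=1$) gives $\tau(K_p)=p\tau(K)=pn$, hence $g_4(K_p)=pn$, $tu_p(K_p)=n$, and $tu_{p-1}(K_p)\geq pn/(p-1)=n+m$. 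The ``$p$-sensitivity'' you were trying to build into a new invariant comes for free from the explicit factor $q$ in $tu_q\geq g_4/q$ once $g_4$ is made large; the price is that $tu_p$ equals $n$ rather than $1$, which is harmless for the statement being proved. If you want to salvage your route, you would need either to construct the strand-sensitive invariant you describe or to relax $tu_p(K_m)\leq 1$ and fall back on exactly this slice-genus mechanism.
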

The gaps between untwisting numbers have previously been studied by Ince, who showed that the gap between $tu_1$ and $tu_2$ can be arbitrarily large \cite{Ince16Untwisting}. Ince also considered the separation between higher untwisting numbers, showing for example that for any $p\geq 1$ the gap between $tu_p$ and $tu$ can be arbitrarily large (cf. \cite[Example~6.5]{Ince17UntwistingHF}). Our examples are similar to those studied by Ince, however we are able to establish stronger results through better lower bounds on $tu_p$. These lower bounds are provided by relating $tu_p$ and the smooth slice genus $g_4(K)$:
\begin{equation}\label{eq:lowerbound}
tu_p(K)\geq \frac{g_4(K)}{p}.
\end{equation}
Here $g_4(K)$ denotes smooth slice genus of $K$. For fixed $p$, the lower bound in \eqref{eq:lowerbound} turns out to be optimal as the knots used to prove Theorem~\ref{thm:gaps} will be knots attaining equality in \eqref{eq:lowerbound}.

Whilst \eqref{eq:lowerbound} shows that the $tu_p$ admit lower bounds based on the smooth slice genus, these lower bounds do not yield any information about $tu$. It turns out that one can obtain lower bounds on $tu$ using the topological slice genus:
\begin{equation}\label{eq:tu_lowerbound}
tu(K)\geq \gtop(K).
\end{equation}
This can be seen from results of Ince \cite{Ince16Untwisting}, who used the work of Borodzik and Friedl \cite{Borodzik14Algebraic, Borodzik15UnknottingI} to show that $tu(K)\geq \ualg(K)$. Alternatively one can establish \eqref{eq:tu_lowerbound} using the concept of algebraic genus \cite{McCoy19algebraic}.

Given that the unknotting numbers of torus knots were notoriously hard to compute, it is natural to wonder what one can say about the behaviour of untwisting numbers for torus knots. For torus knots with braid index at least four the untwisting number, $tu_2$ is strictly smaller than the unknotting number.

\begin{restatable}{thm}{untwistingtorus}\label{thm:untwisting_torus}
If $\min\{p,q\}\geq 4$, then $tu_{2}(T_{p,q})<u(T_{p,q})$. Furthermore, for any $p,q>1$ we have
\begin{equation}\label{eq:tu_2_upperbound}
tu_2(T_{p,q})\leq \frac{3}{8}pq.
\end{equation}
\end{restatable}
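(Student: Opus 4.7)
Plan. The proof has two parts: the global upper bound $tu_2(T_{p,q}) \leq \tfrac{3}{8}pq$ for $p,q>1$, and the strict inequality $tu_2(T_{p,q}) < u(T_{p,q})$ when $\min\{p,q\}\geq 4$. The central task is to construct null-homologous twists on four strands that simplify $T_{p,q}$ more efficiently than ordinary crossing changes. Since the standard braid presentation $(\sigma_1\sigma_2\cdots\sigma_{p-1})^q$ has all strands oriented coherently, any disk that meets the braid transversely in its ``horizontal'' direction will intersect $K$ with the same sign everywhere. So the first step is to arrange the knot so that 4 strands can be isolated with algebraic intersection zero with some disk.

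The plan is to isolate a small block inside $T_{p,q}$ containing a few consecutive crossings and ``turn around'' one pair of strands by a local isotopy, producing four strands locally oriented $+,+,-,-$. An unknot $C$ encircling them then has $\lk(C,K)=0$ and bounds a disk meeting $K$ in four points. Performing a $\pm 1$--surgery on $C$ alters a controlled number of crossings of $T_{p,q}$ at once, yielding a simpler knot (ideally a lower-parameter torus knot or a connected sum of such). Iterating and book-keeping the number of crossings resolved per twist should give the asymptotic rate $\tfrac{3}{8}$ in the bound $tu_2(T_{p,q})\leq \tfrac{3}{8}pq$; here the factor $\tfrac{3}{8}$ suggests each twist eliminates, on average, $8/3$ of the $\approx pq$ crossings of the standard diagram.

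For the strict inequality, observe that
\[
\tfrac{3}{8}pq \;<\; \tfrac{(p-1)(q-1)}{2} \;=\; u(T_{p,q})
\]
is equivalent to $(p-4)(q-4)>12$, which handles all $T_{p,q}$ with $\min\{p,q\}\geq 4$ except a finite list of small cases (notably $T_{4,q}$ for all $q$, and a handful of $T_{5,q}$, $T_{6,q}$, $T_{7,q}$). For these exceptional cases I would give explicit unknotting sequences directly, using the same ``turn-around'' construction adapted to small parameters, and verify by a concrete count that strictly fewer twists are used than $u(T_{p,q})$.

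The main obstacle is the construction of the efficient null-homologous 4-strand twist itself. The difficulty is genuinely topological: on the standard torus, any simple closed curve null-homologous relative to $T_{p,q}$ must be isotopic to a cable of $T_{p,q}$, and hence does not yield a useful twist. One therefore must leave the Heegaard torus and search on a Seifert surface (or Bennequin surface) of $T_{p,q}$ for a subsurface whose boundary is unknotted in $S^3$, meets the knot geometrically in 4 points with cancelling orientations, and reduces $T_{p,q}$ to a substantially simpler knot after the twist. Once such a building block is in hand, both the $\tfrac{3}{8}pq$ bound and the strict inequality follow from iteration and case-by-case verification.
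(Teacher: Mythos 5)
There is a genuine gap: the central construction is missing. You state yourself that ``the main obstacle is the construction of the efficient null-homologous 4-strand twist itself'' and leave it open, suggesting a search for a suitable subsurface of a Seifert surface. The paper supplies exactly this building block diagrammatically: a single null-homologous twist on four strands together with two crossing changes converts a square of four positive crossings into four negative ones (a ``square change,'' costing $3$ twists on at most four strands). Note that no ``turn-around'' isotopy is needed to get $\lk(C,K)=0$: in a square of crossings the two over-strands and the two under-strands pierce the disk of the encircling curve with opposite signs even though all strands of the torus knot are coherently oriented. Viewing a full positive twist on $k$ strands as a full twist on $k-2$ strands with two strands wrapping around, the paper pulls those two strands free with $\frac{k-2}{2}$ square changes and one crossing change, and iterating gives that a full twist on $k$ strands is undone by $\frac{3k^2-2k}{8}$ (resp.\ $\frac{3k^2-3}{8}$) twists on at most four strands for $k$ even (resp.\ odd). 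This yields the recursion $tu_2(T_{p,q})\leq tu_2(T_{p-q,q})+\frac{3q^2-2q}{8}$ (or the odd analogue), from which the bound $tu_2(T_{p,q})\leq\frac38 pq$ follows by induction on $\min\{p,q\}$. Without an explicit construction of this kind, your ``each twist eliminates on average $8/3$ crossings'' is only a heuristic, not a proof of \eqref{eq:tu_2_upperbound}.

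Your route to the strict inequality also has a hole. The comparison $\frac38 pq < \frac{(p-1)(q-1)}{2}$ is indeed equivalent to $(p-4)(q-4)>12$, but the exceptional set is not ``a finite list'': it contains every $T_{4,q}$, as well as $T_{5,q}$ for $q\leq 16$, $T_{6,q}$ for $q\leq 10$, and so on, and for these infinitely many knots you offer only a promise of explicit twisting sequences. The paper sidesteps this entirely by comparing the recursions for $tu_2$ and for $u(T_{p,q})=\frac{q(q-1)}{2}+u(T_{p-q,q})$ term by term: each step contributes a gap of $\frac{q^2-2q}{8}$ ($q$ even) or $\frac{q^2-4q+3}{8}$ ($q$ odd), i.e.\ $\frac12\left\lfloor\frac q2\right\rfloor\left(\left\lfloor\frac q2\right\rfloor-1\right)$, which is at least $1$ as soon as $q\geq 4$. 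Since $u-tu_2\geq 0$ always holds, a single such step already gives $tu_2(T_{p,q})<u(T_{p,q})$ for all $\min\{p,q\}\geq 4$, with no exceptional cases to handle. You would need either to adopt such a step-by-step comparison or to produce the missing explicit constructions for the infinite exceptional families before your argument closes.
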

Since the unknotting number satisfies $u(T_{p,q})=\frac12(p-1)(q-1)$, it follows that for torus knots the difference between $tu_1$ and $tu_2$ grows arbitrarily large as the braid index increases. For torus knots with braid index two, i.e those of the form $K=T_{2,p}$ we have $tu(K)=u(K)=\frac12 |\sigma(K)|$. This follows from the fact that the classical knot signature provides a lower bound for  $\gtop(K)$ and hence for $tu(K)$. The same reasoning shows that $tu(T_{3,4})=u(T_{3,4})=3$ and $tu(T_{3,5})=u(T_{3,5})=4$. However, for the  remaining torus knots of braid index three understanding their untwisting numbers seems much more challenging.

\section{Unbounded gaps}
First we prove the following proposition, which implies \eqref{eq:lowerbound}. 
\begin{prop}\label{prop:smooth_twisting}
If $K$ and $K'$ are knots related related by a null-homologous twist on $2p$ strands, then
\[
|g_4(K)-g_4(K')|\leq p.
\]
\end{prop}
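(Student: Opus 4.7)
The strategy is to construct, in $S^3 \times [0,1]$, a connected orientable embedded cobordism $\Sigma$ from $K$ to $K'$ of genus at most $p$. Once this is available, the proposition follows from the usual capping argument: if $F \subset B^4$ is a minimum-genus smooth surface bounded by $K$, then pushing $F$ into $S^3 \times [-1,0]$ and gluing to $\Sigma$ yields a smooth surface in $B^4$ bounded by $K'$ of genus at most $g_4(K) + g(\Sigma) \leq g_4(K) + p$, whence $g_4(K') \leq g_4(K) + p$. The symmetric argument swapping the roles of $K$ and $K'$ gives the reverse inequality.

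To build $\Sigma$, recall the setup: $K$ and $K'$ are related by $\pm 1$-surgery on an unknot $C$ disjoint from $K$ with $\lk(C,K) = 0$, and $C$ bounds a disk $D$ meeting $K$ transversely in $2p$ points with algebraic intersection number zero, hence in $p$ positive and $p$ negative intersection points. The plan is to pair each positive point with a negative one and, for each of the $p$ pairs, choose disjoint arcs $\alpha_i \subset \mathrm{int}(D)$ joining the paired points. Using these arcs as cores, framed by $D$, gives $p$ bands $b_i$ attached to $K$; since each pair consists of intersections of opposite sign, the bands are orientation-preserving with respect to any orientation on $K$. Performing the $p$ band surgeries simultaneously yields a link $L$ which, by construction, can be isotoped off $D$ into the complement of a neighborhood of $C$.

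Since $L$ is disjoint from a neighborhood of $C$, it is unaffected by the $\pm 1$-surgery, so the same $p$ bands also describe a cobordism from $K'$ to $L$ in the post-surgery $S^3$. Let $\Sigma_1 \subset S^3 \times [0,\tfrac12]$ realize the cobordism from $K$ to $L$ by the $p$ band moves, and $\Sigma_2 \subset S^3 \times [\tfrac12,1]$ the cobordism from $L$ to $K'$. Each $\Sigma_i$ is a cylinder with $p$ orientable $1$-handles attached, hence is connected, orientable, and has Euler characteristic $-p$. Concatenating along the common $L$-boundary gives $\Sigma = \Sigma_1 \cup_L \Sigma_2$ with $\partial \Sigma = K \sqcup K'$, $\chi(\Sigma) = -2p$, and (as the union of two connected surfaces glued along a nonempty subsurface) connected; hence $g(\Sigma) = p$, as required.

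The main points that need verification are the two geometric claims underpinning the construction: (i) that the arcs can be chosen pairwise disjoint and framed by $D$ so that $L$ is isotopic off $D$, and (ii) that pairing opposite-signed intersections produces orientation-preserving bands, so that $\Sigma$ is orientable. Both are routine: any set of $p$ disjoint pairs of points in a disk admits a system of $p$ disjoint embedded joining arcs, and the sign pairing is exactly what is needed for a coherent orientation on $K$ to extend across each band. The one subtlety worth highlighting is that the combinatorics of signs forces the number of components of $L$ to have parity opposite to $p$, so the Euler-characteristic computation above is indeed realized by an integer genus.
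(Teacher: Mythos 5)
Your argument is correct, and it reaches the same structural conclusion as the paper---a connected oriented cobordism from $K$ to $K'$ built from $2p$ coherent band moves, hence of genus $p$, capped off to compare slice genera---but it produces the band moves by a genuinely different construction. The paper works with the local model of the twist region: it isotopes the twist into a standard full twist on $2p$ strands ($p$ up, $p$ down, which is where $\lk(C,K)=0$ enters) and shows by an explicit inductive picture that two oriented band moves peel off two strands at a time, so $2p$ oriented band moves take $K$ directly to $K'$. You instead stay with the surgery description: pair the $p$ positive with the $p$ negative intersections of $K$ with the twisting disk $D$, band along disjoint arcs in $D$ to obtain a link $L$ disjoint from $D$, observe that blowing down the $\pm1$-framed unknot $C$ leaves $L$ unchanged, and then reverse the $p$ (twisted images of the) bands to reach $K'$; the Euler-characteristic count $\chi=-2p$ with two knot boundary components again gives genus $p$. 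Your route avoids the explicit isotopy to a standard full twist and the pictorial induction, at the cost of invoking the blow-down fact; the paper's route is more concrete and also furnishes the explicit moves reused later (Figure~\ref{fig:2moves}) in the torus-knot argument. One phrase to tighten: you justify that $L$ is unaffected by the surgery ``since $L$ is disjoint from a neighborhood of $C$,'' but disjointness from $\nu C$ alone is not enough (strands through $D$ would get twisted); the correct justification is the disjointness of $L$ from the whole disk $D$, which your construction does provide, so this is a matter of wording rather than a gap. Likewise ``framed by $D$'' should really be the thickening of the arcs in the direction of $K$ transverse to $D$, so that the band ends are subarcs of $K$; with that reading the orientation check for opposite-sign pairs goes through exactly as you say.
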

\begin{proof} We observe that a null-homologous twist on $2p$ strands can be accomplished by $2p$ oriented band moves. This can be proven by induction on $p$. Consider a full twist on $2p$ strands with $p$ strands oriented up and $p$ strands oriented down. Such a twist can be arranged as a full twist on $2p-2$ strands with two more strands, one oriented up and the other down, ``wrapping around'' the full twist as in the left hand side of Figure~\ref{fig:band_additions}. As illustrated in Figure~\ref{fig:band_additions} one can perform two oriented band moves and isotopies to produce a full twist on $2p-2$ strands with two parallel strands alongside. Thus, proceeding inductively, we see that the full twist on $2p$ strands can be converted to $2p$ parallel strands by $2p$ oriented band moves.

Thus if $K$ and $K'$ are related by a null-homologous twist on $2p$ strands, then there is a sequence of $2p$ oriented band moves and isotopies that  convert $K$ into $K'$. These moves allow one to construct a smoothly embedded surface $F$ of genus $p$ properly embedded in $S^3 \times [0,1]$ so that $\partial F = K \times\{0\} \cup K'\times \{1\}$. Thus
\[
|g_4(K)-g_4(K')|\leq p,
\]
as required.
\begin{figure}
  \begin{overpic}[width=0.95\textwidth]{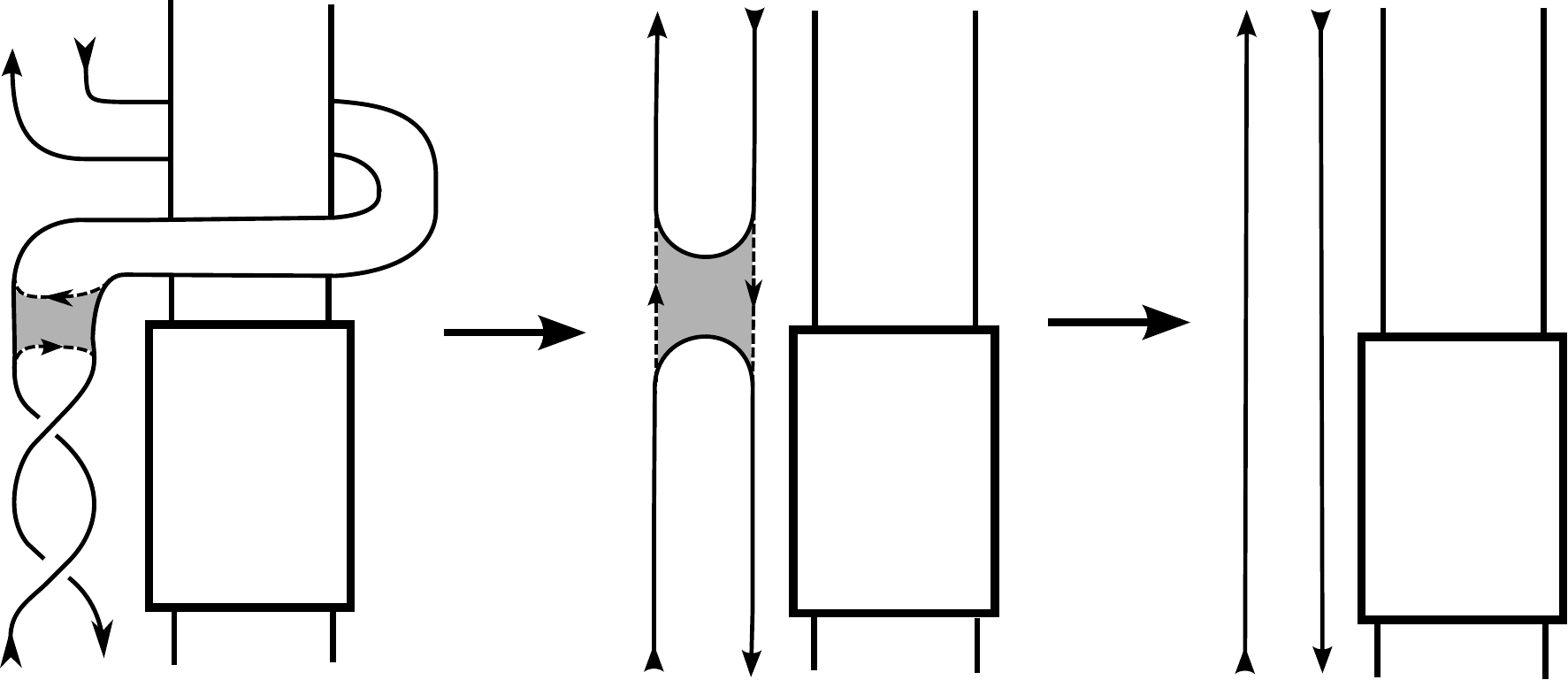}
    \put (9,0) {$\underbrace{\hspace{2.1cm} }_{}$}
    \put (7,-3.5) {{\large $2p-2$ strands}}
  \put (14,11) {{\LARGE $+1$}}
  \put (55,11) {{\LARGE $+1$}}
  \put (92,11) {{\LARGE $+1$}}
  \end{overpic}
  \vspace{0.5cm}
  \caption{Two oriented band moves convert a full twist on $2p$ strands into a full twist on $2p-2$ strands with two parallel strands.}
  \label{fig:band_additions}
\end{figure}
\end{proof}

Next we note how twisting operations transform under satellite operations. 
\begin{lem}\label{lem:satellite_move}
Let $K$ and $K'$ be knots related by a null-homologous twist on $2p$ strands. then for any pattern $P\subseteq S^1\times D^2$ with geometric winding number $w$, the satellites $P(K)$ and $P(K')$ are related by a null-homologous twist on $2pw$ strands.
\end{lem}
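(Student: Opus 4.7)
The plan is to use the same twisting curve $C$ that witnesses the null-homologous twist from $K$ to $K'$, and to verify that, after a suitable isotopy of the pattern $P$ inside a tubular neighborhood of $K$, the same $C$ witnesses a null-homologous twist on $2pw$ strands between $P(K)$ and $P(K')$.

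Let $D$ be the embedded disk bounded by $C$, meeting $K$ transversely in $2p$ points $x_1,\dots,x_{2p}$ with signs $\epsilon_i=\pm 1$. I would first shrink the tubular neighbourhood $N=N(K)$ so that $N$ is disjoint from $C$; then $D\cap N$ consists of $2p$ disjoint meridional disks $D_1,\dots,D_{2p}$, one near each $x_i$, and the satellite $P(K)\subseteq N$ is automatically disjoint from $C$. For the linking condition, the signed intersection $D\cdot P(K)=\sum_i D_i\cdot P$ equals $\sum_i \epsilon_i\cdot w_{\mathrm{alg}}(P)=w_{\mathrm{alg}}(P)\cdot\lk(C,K)=0$, where $w_{\mathrm{alg}}(P)$ denotes the algebraic winding number of $P$, so $\lk(C,P(K))=0$.

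The crux is then to realize $|D\cap P(K)|=2pw$. Fix a framed identification $N\cong S^1\times D^2$, so that each $D_i$ corresponds to the meridional disk at some angular position $\theta_i\in S^1$. By definition of the geometric winding number, some isotopy representative of $P$ meets a meridional disk of $S^1\times D^2$ transversely in exactly $w$ points. A further isotopy concentrates all critical values of the projection $P\to S^1$ into a small arc $I\subset S^1$ disjoint from $\{\theta_1,\dots,\theta_{2p}\}$, with the local minima ordered to occur before the local maxima within $I$; then the fiber size of the projection is constantly equal to $w$ on $S^1\setminus I$, and in particular $|D_i\cap P|=w$ for each $i$, giving $|D\cap P(K)|=\sum_i|D_i\cap P|=2pw$.

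The main obstacle is this simultaneous minimization at all $2p$ meridional disks. One must isotope $P$ both to confine its ``excess'' intersections to a small angular interval and to order the critical points of the projection so that the fiber size outside this interval equals the geometric winding number exactly; the flexibility to do both comes from the freedom to permute critical points of the projection by ambient isotopy in $S^1\times D^2$ and to rotate the pattern in the $S^1$-direction to avoid the finite set $\{\theta_1,\dots,\theta_{2p}\}$.
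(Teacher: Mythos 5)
Your argument verifies three of the four things needed---that $C$ can be made disjoint from $P(K)$, that $\lk(C,P(K))=0$, and that the disk $D$ can be arranged to meet $P(K)$ in exactly $2pw$ points---but it never justifies the remaining, and central, claim: that $\pm1$-surgery on $C$ actually carries $P(K)$ to the satellite $P(K')$ \emph{with the same pattern} $P$. Surgery on $C$ takes $(S^3,\nu K)$ to $(S^3,\nu K')$ and carries the pattern $P\subset \nu K$ along, but the satellite $P(K')$ is defined via the \emph{0-framed} identification $\nu K'\cong S^1\times D^2$; a priori the surgery could return the solid torus with a different framing, in which case the resulting knot would be $P'(K')$ for a pattern $P'$ obtained from $P$ by adding twists. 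This is where the hypothesis $\lk(C,K)=0$ enters in an essential way, and not only to give $\lk(C,P(K))=0$: because $C$ is null-homologous in $S^3\setminus\nu K$, the surgery takes the meridian and Seifert-framed longitude of $K$ to the meridian and Seifert-framed longitude of $K'$, so the 0-framing is preserved and the image of $P\subset\nu K$ really is $P(K')$. The paper's proof is organized precisely around this point (it decomposes $S^3\setminus\nu P(K)$ as $(S^3\setminus\nu K)\cup X_P$ and checks that the meridian and null-homologous longitude are preserved); your write-up asserts the conclusion of that step without argument, so as it stands there is a genuine gap, though an easily fillable one.

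The part of your proposal devoted to the count $|D\cap P(K)|=2pw$ is correct and in fact more detailed than the paper, which simply asserts that the disk meets $P(K)$ in $2pw$ points. One small precision: after choosing a representative of $P$ meeting some meridional disk at angle $\theta_0$ in exactly $w$ points, the isotopy confining the rest of $P$ to a small arc $I$ should be performed so as to fix a collar of that minimizing disk (e.g.\ expand a small collar of $\theta_0$ over $S^1\setminus I$), so that the constant fiber size on $S^1\setminus I$ is genuinely $w$ and not some larger constant; the remark about ordering minima before maxima inside $I$ is not needed.
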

\begin{proof}
Let $X_P$ denote the complement $X_P=S^1\times D^2\setminus \nu P$ which comes with a distinguished meridian $\mu$ and $\lambda$ in $\partial (S^1\times D^2)$. The knot complement $S^3\setminus \nu P(K)$ is obtained by gluing $X_P$ to $S^3\setminus \nu K$ so that $\mu$ and $\lambda$ are glued to the meridian and null-homologous longitude of $K$ respectively. We can construct $S^3\setminus \nu P(K')$ similarly by gluing $X_P$ to $S^3\setminus \nu K'$.

Since $K$ and $K'$ are related by a null-homologous twist there is a null-homologous curve $C \subset S^3\setminus \nu K$ which can be surgered to obtain $S^3\setminus \nu K'$. Since $C$ is null-homologous in $S^3\setminus \nu K$, surgering $C$ takes the meridian and null-homologous longitude of $K$ to the meridian and null-homologous longitude of $K'$. We can consider $C$ as a curve in $S^3\setminus \nu P(K)= S^3\setminus \nu K \cup X_P$. Moreover surgering $C$ will produce $S^3\setminus \nu P(K')=S^3\setminus \nu K' \cup X_P$. 
Since $C$ is null-homologous in $S^3\setminus \nu K$ it is null-homologous in $S^3\setminus \nu P(K)$. Moreover if $C$ bounds a disk in  $S^3$ intersecting $K$ in $2p$ points and the $P$ has geometric winding number $w$, then $C$ bounds a disk intersecting $P(K)$ in $2pw$ points. Thus $P(K)$ and $P(K')$ are related by a null-homologous twist on $2pw$ strands, as required. 
\end{proof}

It immediately follows from Lemma~\ref{lem:satellite_move} that given a pattern $P$ with geometric winding number $w$ we have the following inequality:
\begin{equation}\label{eq:pw_upperbound}
tu_{pw}(P(K))\leq tu_p(K) + tu_{pw}(P(U)).
\end{equation}
Notice that Lemma~\ref{lem:satellite_move} also implies the following result.
\begin{corol}\label{cor:satellite}
For any knot $K$ and pattern $P$, we have
\[
tu(P(K))\leq tu(K) + tu(P(U)).
\]
\end{corol}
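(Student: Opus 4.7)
The plan is to exhibit an explicit sequence of null-homologous twists that unknots $P(K)$, with length bounded by $tu(K)+tu(P(U))$. Since $tu$ ignores the strand count of each twist, we have full freedom to use Lemma~\ref{lem:satellite_move}, whose only effect on the satellite is to multiply the strand count of each twist by $w$.

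First I would choose a sequence of null-homologous twists of minimal length realizing $tu(K)$, giving a chain of knots
\[
K = K_0,\ K_1,\ \dots,\ K_n = U,
\]
with $n = tu(K)$, where $K_i$ and $K_{i+1}$ are related by a null-homologous twist on some $2p_i$ strands. Applying the satellite operation $P$ and invoking Lemma~\ref{lem:satellite_move} with geometric winding number $w$, each consecutive pair $P(K_i)$ and $P(K_{i+1})$ is related by a null-homologous twist on $2p_i w$ strands. Hence
\[
P(K) = P(K_0),\ P(K_1),\ \dots,\ P(K_n) = P(U)
\]
is a sequence of $n = tu(K)$ null-homologous twists converting $P(K)$ into $P(U)$.

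Next I would concatenate this with a shortest sequence of null-homologous twists unknotting $P(U)$, which has length $tu(P(U))$ by definition. Since the minimum in $tu$ is taken over all $p$, these concatenated sequences together produce a sequence of $tu(K) + tu(P(U))$ null-homologous twists converting $P(K)$ to the unknot. This yields
\[
tu(P(K)) \leq tu(K) + tu(P(U)),
\]
as required. There is no real obstacle here: the content of the argument is already contained in Lemma~\ref{lem:satellite_move}, and the point of passing from $tu_p$ to $tu$ is precisely to absorb the strand-count multiplication by $w$, which would otherwise force the $pw$ correction seen in \eqref{eq:pw_upperbound}.
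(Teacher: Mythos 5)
Your proof is correct and follows essentially the same route as the paper, which simply notes that the corollary is an immediate consequence of Lemma~\ref{lem:satellite_move}; your write-up just spells out that deduction (apply the lemma stepwise to an optimal twisting sequence for $K$, then concatenate with an optimal sequence for $P(U)$, noting that $tu$ places no restriction on the strand count).
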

Although we won't use Corollary~\ref{cor:satellite} at any point in this paper,  we include it for comparison with an analogous inequality that exists for the algebraic genus \cite{Feller19satellite, McCoy19algebraic}.

Now we construct our examples. We will use Ozsv\'{a}th and Szab\'{o}'s $\tau$-invariant \cite{Ozsvath03Tau} to obtain lower bounds on $g_4(K)$. 

\unboundedgaps*
\begin{proof}
Set $n=m(p-1)$ and take $K$ to be any knot with $\tau(K)=u(K)=n$. For example, the torus knot $K=T_{2,2n+1}$. Let $K_p$ be the $(p,1)$-cable of $K$. Since the $(p,1)$-cable of the unknot is itself unknotted, it follows from \eqref{eq:pw_upperbound} that
\begin{equation}\label{eq:tu_p_upper}
tu_p(K_p)\leq tu_1(K)= n
\end{equation}
Now we compute the $\tau$-invariant of $K_p$ using the work of Hom \cite{Hom14Bordered}. The value of $\tau(K_p)$ depends on an auxiliary invariant $\varepsilon(K)$ which takes values in $\{-1,0,1\}$. Since $\tau(K)=u(K)$, we have that $\tau(K)=g_4(K)$. By \cite[Corollary~4]{Hom14Bordered} this implies that $\varepsilon(K)=1$. Thus the relevant formula for $\tau$ in \cite[Theorem~1]{Hom14Bordered} shows that
\[\tau(K_p)=p\tau(K)=pn.\]
Thus by \eqref{eq:tu_p_upper} and \eqref{eq:lowerbound}, we have
\[
\tau(K_p)=pn\leq g_4(K_p) \leq ptu_p(K_p)\leq pn.
\]
Hence $tu_p(K_p)=n$ and $g_4(K_p)=np$. So by applying \eqref{eq:lowerbound} to $tu_{p-1}(K_p)$ we have that
\[
tu_{p-1}(K_p)\geq \frac{np}{p-1}=n+\frac{n}{p-1}=n+m.
\]
Thus we have
\[
tu_{p-1}(K_p)-tu_{p}(K_p) \geq m,
\]
which is the required bound.
\end{proof}

\section{Untwisting torus knots}
Now we consider the untwisting numbers of torus knots.
\untwistingtorus*
\begin{proof}
\begin{figure}
  \begin{overpic}[width=0.95\textwidth]{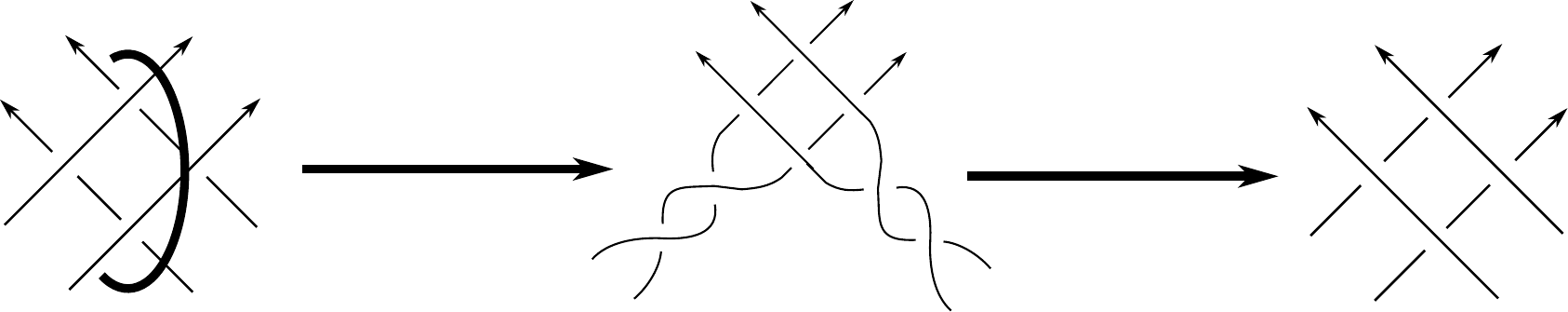}
    \put (20,11) {{\small twist on $4$ strands}}
    \put (61,11) {{\small two crossing changes}}
     \put (7,18) {$-1$}
  \end{overpic}
  \caption{Performing four crossing changes with three null-homologous twists.}
  \label{fig:double_change}
\end{figure}

Figure~\ref{fig:double_change} shows how a null-homologous twist on four strands followed by two crossing changes can be used to convert a square of four positive crossings into a square of negative crossings. We will refer to this operation as a `square change'.
\begin{figure}
  \begin{overpic}[width=0.95\textwidth]{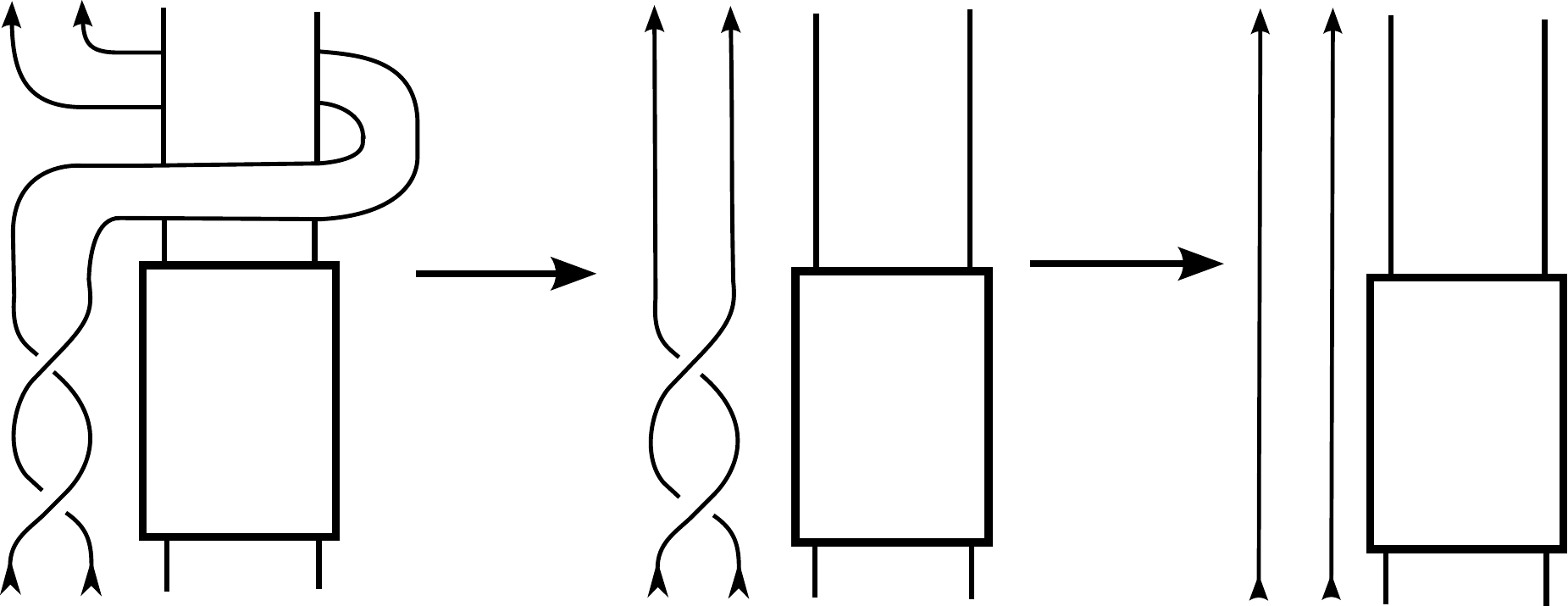}
    \put(9.5,-0.1){$\underbrace{\hspace{1.9cm} }_{}$}
    \put(9.5,-3.5){{\small $k-2$ strands}}
    \put (25,16) {\parbox{2cm}{{\small $\frac{k-2}{2}$ square changes}}}
    \put (67,17) {\parbox{2cm}{{\small crossing change}}}
    \put (91,11) {{\LARGE $+1$}}
    \put (54,11) {{\LARGE $+1$}}
    \put (13,11) {{\LARGE $+1$}}
  \end{overpic}
  \vspace{0.5cm}
  \caption{Using square changes to undo a full twist.}
  \label{fig:2moves}
\end{figure}
Suppose that we have a full twist on $k$ strands with the strands oriented in the same direction so that all the crossings are positive. We will assume first that $k$ is even. As shown in the right hand side of Figure~\ref{fig:2moves} we can view this full twist as a full twist on $k-2$ strands with two more strands wrapping round this full twist. As shown in Figure~\ref{fig:2moves}, we can convert this full twist into two parallel strands and a full twist on $k-2$ strands by taking the two strands and passing them through the other full twist on $k-2$ strands using $\frac{k-2}{2}$ square changes and performing a crossing change. This can achieved by $3\frac{k-2}{2} + 1 =\frac{3k}{2}-2$ null-homologous twists on at most four strands. Thus the full twist on $k$ strands can be converted into $k$ parallel strands by
\[
\sum_{i=1}^{\frac{k}{2}} \left(\frac{3k}{2}-2\right) = \frac{3k^2-2k}{8}
\]
null-homologous twists on at most four strands.

Now suppose that $k$ is odd. By performing $k-1$ crossing changes we can convert this to a full twist on $k-1$ strands with a single parallel strand alongside. The full twist on $k-1$ strands can then be undone as before, this shows that a full twist on $k$ strands can be converted into $k$ parallel strands by
\[
k-1+ \frac{3(k-1)^2-2(k-1)}{8}=\frac{3k^2-3}{8}
\]
null-homologous twists on at most four strands. Thus we see that for torus knots $tu_2$ satisfies the recursive upper bound
\begin{equation}\label{eq:tu2_recursion}
tu_2(T_{p, q})\leq  tu_2(T_{p-q,q})+ \begin{cases}
\frac{(3q^2-2q)}{8}  &\text{$q$ even}\\
\frac{(3q^2-3)}{8}  &\text{$q$ odd},
\end{cases}
\end{equation}
where $p>q\geq 2$. For comparison the unknotting number satisfies the recursion
\[
u(T_{p, q})=\frac{q(q-1)}{2}+ u(T_{p-q,q}).
\]
Thus we see that
\begin{align*}
u(T_{p,q})-tu_{2}(T_{p,q})&\geq u(T_{p-q,q})- tu_{2}(T_{p-q,q}) +\frac{q(q-1)}{2} - \begin{cases}
\frac{(3q^2-2q)}{8}  &\text{$q$ even}\\
\frac{(3q^2-3)}{8}  &\text{$q$ odd}
\end{cases}\\
& \geq \begin{cases}
\frac{(q^2-2q)}{8}  &\text{$q$ even}\\
\frac{(q^2-4q+3)}{8}  &\text{$q$ odd}
\end{cases}\\
&=\frac{1}{2}\qfloor \left(\qfloor -1\right).
\end{align*}
Since this final line is at least one whenever $q\geq 4$, this shows that $u(T_{p, q})>tu_2(T_{p, q})$ whenever $\min\{p,q\}\geq 4$.

Now we prove the upper bound \eqref{eq:tu_2_upperbound} by induction on $\min\{p,q\}$. Since $\min\{p,q\}=1$ implies that $T_{p, q}$ is unknotted, \eqref{eq:tu_2_upperbound} is vacuously true. Without loss of generality suppose that $p>q>1$ and that we can write $p=nq+r$ where $1\leq r <q$ and $n\geq 1$. Suppose inductively that $tu_2(T_{q, r})\leq \frac{3qr}{8}$. By applying \eqref{eq:tu2_recursion} $n$ times  we see that
\begin{align*}
tu_2(T_{p, q})&<\frac{3nq^2}{8} + tu_2(T_{q,r})\\
&\leq \frac{3nq^2}{8} + \frac{3rq}{8}= \frac{3pq}{8},
\end{align*}
as required.
\end{proof}

\bibliography{twist_bib}
\bibliographystyle{alpha}

\end{document}